\DeclareMathAlphabet\mathbfcal{OMS}{cmsy}{b}{n}
\newcommand{\der}{\delta}
\newcommand{\E}{\mathbb E}
\newcommand{\R}{\mathbb R}
\newcommand{\be}{\mathbf{E}}
\newcommand{\bp}{\mathbf{P}}
\newcommand{\ce}{\mathcal E}
\newcommand{\cf}{\mathcal F}
\newcommand{\cp}{\mathcal P}
\newcommand{\ep}{\varepsilon}
\newcommand{\ga}{\gamma}
\newcommand{\la}{\lambda}
\newcommand{\laa}{\Lambda}
\newcommand{\om}{\omega}
\newcommand{\oom}{\Omega}
\newcommand{\si}{\sigma}
\newcommand{\tte}{\Theta}
\newcommand{\lp}{\left(}
\newcommand{\rp}{\right)}
\newcommand{\lc}{\left[}
\newcommand{\rc}{\right]}
\newcommand{\lcl}{\left\{}
\newcommand{\rcl}{\right\}}
\newcommand{\lln}{\left|}
\newcommand{\rrn}{\right|}
\newtheorem{theorem}{Theorem}[section]
\newtheorem{assumption}[theorem]{Assumption}
\newtheorem{corollary}[theorem]{Corollary}
\theoremstyle{remark}
\newtheorem{remark}[theorem]{Remark}
\theoremstyle{remark}
\newcommand{\bean}{\begin{eqnarray*}}
\newcommand{\eean}{\end{eqnarray*}}
\newcommand{\ben}{\begin{enumerate}}
\newcommand{\een}{\end{enumerate}}
\newcommand{\beq}{\begin{equation}}
\newcommand{\eeq}{\end{equation}}
\begin{document}

\begin{frontmatter}

\title{A Many-Server Functional Strong Law For A Non-Stationary Loss Model}

%% or include affiliations in footnotes:
%\author[Department of Statistics,mysecondaryaddress]{Elsevier Inc}
%\ead[url]{www.elsevier.com}
%
%\author[mysecondaryaddress]{Global Customer Service\corref{mycorrespondingauthor}}
%\cortext[mycorrespondingauthor]{Corresponding author}
%\ead{support@elsevier.com}
%
%\address[mymainaddress]{1600 John F Kennedy Boulevard, Philadelphia}
%\address[mysecondaryaddress]{360 Park Avenue South, New York}
%
\author[paddress]{Prakash Chakraborty}
\ead{chakra15@purdue.edu}
\author[haddress]{Harsha Honnappa}
\ead{honnappa@purdue.edu}

\address[paddress]{Department of Statistics, Purdue University, West Lafayette IN, U.S.A.}
\address[haddress]{School of Industrial Engineering, Purdue University, West Lafayette IN, U.S.A.}
\begin{abstract}
The purpose of this note is to show that it is possible to establish a many-server functional strong law of large numbers (FSLLN) for the fraction of occupied servers (i.e., the scaled number-in-system) without explicitly tracking either the age or the residual service times of the jobs in a non-Markovian, non-stationary loss model. This considerable analytical simplification is achieved by exploiting a semimartingale representation. The fluid limit is shown to be the unique solution of a Volterra integral equation.
%The purpose of this note is to show that it is possible to establish a many-server FSLLN for the fraction of occupied servers (i.e., the scaled number-in-system) without explicitly tracking either the age or the residual service times of the jobs in a $M_t/G/n/n$ non-stationary loss model. Well-established standard methodology for proving such limits exploit a martingale representation of the fraction of occupied servers by tracking the age or residual service times. Here, we show that in the many server limit the fraction of occupied servers converges to the solution of a non-linear Volterra integral equation by exploiting a natural semimartingale representation. 
\end{abstract}

\begin{keyword}
Fluid limit, many-server, loss model, non-stationary.
\MSC[2010] 60K25, 60F17, 90B22
\end{keyword}

\end{frontmatter}

%\linenumbers
%\maketitle

\section{Introduction}
This note establishes a functional strong law of large numbers (FSLLN) for a non-Markovian, non-stationary $M_t/G/n/n$ loss model in the many-server limit as $n \to \infty$. Stationary loss models have been studied extensively, with the Erlang-B formula being a cornerstone consequence of this literature. Non-stationary models, on the other hand, are much harder to analyze, and closed form expressions are almost impossible to derive. Stochastic process approximations are therefore crucial for performance analysis of loss models. There is a significant body of work focused on establishing fluid approximations in many-server settings, though the methods are non-trivial. In the formative paper~\cite{kaspi}, the elapsed waiting time (or `age') of the jobs in the system are tracked, using which it is possible to obtain a martingale representation of the number-in-system process that yields the desired FSLLN for a $G/GI/n/\infty$ queue. In contrast,~\cite{zhang} develops a method where the residual service times of jobs in the $G/GI/n/\infty$ queue are tracked, in which case it is possible to establish the fluid limit without recourse to a martingale representation. Of course, while the latter approach in essence assumes that the service times are known at the time of arrival, as commented on in~\cite{zhang} the approach offers significant analytical simplification.

 The purpose of this note is to show that it is possible to establish a many-server FSLLN for the fraction of occupied servers (i.e., the scaled number-in-system) in a $M_t/G/n/n$ loss model, without explicitly tracking either the age or the residual service times of the jobs. We capitalize on the fact that this process is the sum of a pure jump bounded semimartingale and a bounded finite variation process. Indeed, we show that in the many server limit the fraction of occupied servers converges to the solution of a non-linear Volterra integral equation by exploiting the fact that the semimartingale is uniformly zero and the bounded finite variation process converges to a deterministic function. This semimartingale representation is natural and allows us to avoid tracking the residual service times. Our proofs are also considerably simpler and easier to follow. Consequently, we anticipate that our analysis will be intuitive and useful for a broad range of applications. For instance, as a consequence of our main result, we present a fluid limit for the fraction of arrivals that are blocked. This result can be used as a proxy for the blocking probability in the many-server limit. A crucial motivation for this paper is the need to develop `transitory fluid' traffic models; i.e., systems where a finite volume of jobs (in a continuum) enter a system over time. Queueing models fed by this type of traffic have been studied in~\cite{chakra,honnappa,bet} -- however all of these consider discrete-event models of traffic. Transitory {\it fluid} traffic models have not been studied in the literature, and would be of considerable use in the modeling of capacitated energy storage systems and high-speed computer networks. As an auxiliary result, therefore, we also establish a FSLLN for the integrated fraction of occupied servers. This process is a non-decreasing stochastic fluid with a maximum rate of increase. This type of model can be used to model the energy production from a solar array, for instance. 
 
 %models considered the $\Delta_{(i)}$ traffic model that models the arrival epochs of jobs as independent and identically distributed (i.i.d.) random variables, with the exception of~\cite{}, where a more general traffic model titled the RS(G,p) model was considered. 
 
 While our proof of the main result is not complicated, some commentary is in order. We consider a sequence of $M_t/G/n/n$ models with nonstationary Poisson traffic with deterministic intensity $(\lambda^n(t) : t \geq 0)$ where $\lambda^n(\cdot) = n \lambda(\cdot)$ and stationary general service times with finite first moment. We assume that the traffic and service processes are statistically independent of each other for every $n \geq 1$. Thus, the number of servers (i.e., the system capacity) is in scale with the arrival intensity. Using the fact that the fraction of occupied servers in the $n$th system can be represented as a stochastic integral with respect to a random counting measure, we extract the desired semimartingale representation. In Theorem~\ref{thm:fluid-lim} we show that the fraction of occupied servers converges to a deterministic limit. Identifying the limit function itself turns out to be a little tricky, {owing to the fact that the fraction of occupied servers is the solution of a discontinuous stochastic integral equation}. In order to identify the limit, we smooth the representation of the process by using a mollifier of the discontinuity. This allows us to identify the limit function as the solution of a specific non-linear Volterra integral equation. Finally, to establish uniqueness of the limit, we exploit the fact that the first time that the limit function hits the level $1$ (i.e., the system fluid level is full) is unique, from which it follows recursively that the first (and subsequent) times the limit leaves the fully occupied level and/or (re)enters state $1$ are unique. 

 \paragraph{\bf Related Literature}
There is a significant body of work establishing many-server fluid limits for stationary and non-stationary models, both with and without abandonment, starting with the seminal work in~\cite{halfin}; see~\cite{whitt1} for a recent survey. Our work is related to the development of proof techniques for many-server limits, and to work on approximations to nonstationary loss models. As noted before, Kaspi and Ramanan established a fluid limit for the number-in-system process in the formative paper~\cite{kaspi}, using a martingale representation of extracted using the elapsed waiting time or age of the jobs in the system. \cite{reed} on the other hand established a fluid (and diffusion) limit for the number-in-system process of a stationary $G/GI/n$ queue by using a representation of the number in system process that is similar to the system equations of a $G/GI/\infty$ queue. By establishing a link between the equations,~\cite{reed} was able to prove both a FSLLN and a functional central limit theorem (FCLT). Our approach is similar, in the sense that we exploit a random measure representation of the number-in-system process akin to system state representations in infinite server queues. Note that since we focus on nonstationary loss models, our representation is different from that of~\cite{reed}. While the analysis of models without abandonment are most relevant to our setting,~\cite{zhang} analyzed the number in system process of a $G/GI/n+GI$ queue by tracking the residual service times. In the nonstationary setting, in a series of papers~\cite{lu1,lu2} Lu and Whitt proved a fluid limit for a $G_t/GI/n+GI$ queue that experiences alternating periods of overload and underload, by tracking the age of the jobs in the system {\it a la}~\cite{kaspi}. More broadly, there has been a growing body of work on nonstationary loss models and various approximations, particularly for computing blocking probabilities~\cite{pender1,pender2,whitt1,whitt2,whitt3}. Our results complement these works by providing fluid limits that characterize the fraction of arrivals that encounter a blocked system.
 
\section{Preliminaries}
In this section we present some preliminary results that will be useful later on.
\subsection{Right continuous functions.}
Let $D=D[0,T]$ denote the space of right continuous functions on $[0,T]$ that have left limits. For a function $f \in D$ and $T_0 \subset [0,T]$, let 
$$
w_f(T_0) = \sup \lcl \lln f(t) - f(s) \rrn : s,t \in T_0 \rcl.
$$
Now, for $\der \in (0,T)$ let
$$
w_f'(\der) = \inf_{\cp: {\|\cp\| \leq \der}}~ \max_{0 < i \leq |\cp|} w_f([t_{i-1}, t_i)),
$$
where $\cp$ runs over the set of all partitions of $[0,T]$, in the sense that a generic $\cp$ looks like
$$
\cp = \lcl 0=t_0, \ldots, t_{|\cp|} = T \rcl,
$$
and $\|\cp\|$ denotes the mesh or norm of the partition $\cp$:
$$
\|\cp\| = \max_{1\leq i < |\cp|} \lln t_i - t_{i-1} \rrn.
$$
It can be shown that a function $f$ lies in $D$ if and only if
$$
\lim_{\der \downarrow 0} w_f'(\der) = 0.
$$
The proof of this result and related discussion can be found in \cite[Ch. 14]{billingsley}. The Skorohod distance between two functions $f$ and $g$ in $D$ is defined by
\begin{align*}
d_S (f,g) = \inf &\left\lbrace \ep > 0: \exists \text{ strictly increasing function }\la: [0,T] \mapsto [0,T], \text{ and } \right.\\
&\left.\sup_{t \in [0,T]}\lln \la(t) - t \rrn \leq \ep, ~\sup_{t \in [0,T]} \lln f(\la(t)) - g(t) \rrn \leq \ep \rcl.
\end{align*}
This topology created on $D$ by the Skorohod distance is the Skorohod topology.
\begin{theorem}
A set $A \subset D[0,T]$ has compact closure in the Skorohod topology if and only if:
$$
\sup_{f \in A} \sup_{t \in [0,T]} \lln f(t) \rrn < \infty,
$$
and
$$
\lim_{\der \downarrow 0} \sup_{f \in A} w_{f}'(\der) = 0.
$$
\end{theorem}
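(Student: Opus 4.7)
The plan is to prove both directions of this $(D[0,T],d_S)$-analogue of Arzel\`a--Ascoli. The necessity is straightforward from total boundedness; nearly all of the work sits in the sufficiency direction, which I would carry out via a diagonal extraction producing approximating step functions together with explicit time changes matching their partition points.

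For the necessity, assume $\bar A$ is compact in $(D,d_S)$. Total boundedness plus the elementary fact that $d_S(f,0)$ controls $\sup_t|f(t)|$ up to a Skorohod time change whose displacement is uniformly bounded yields the uniform sup bound at once. For the modulus, fix $\ep > 0$, cover $\bar A$ by finitely many $d_S$-balls of radius $\ep$ around $f_1,\ldots,f_N \in D$, and use the characterization $w_{f_i}'(\der)\downarrow 0$ to pick a common $\der$ with $\max_i w_{f_i}'(\der) < \ep$. A short estimate showing that $w_f'(\der)$ changes by at most $O(\ep)$ under an $\ep$-perturbation in $d_S$ (time-changing a mesh-$\der$ partition produces a partition still of mesh roughly $\der$, and increments move by at most the $d_S$-error) then gives $\sup_{f\in A} w_f'(\der) = O(\ep)$, which is what is wanted.

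For the sufficiency, assume both displayed conditions and take an arbitrary sequence $\{f_n\}\subset A$. For each integer $k\geq 1$, the uniform modulus condition produces $\der_k > 0$ with $\sup_n w_{f_n}'(\der_k) < 1/k$, and hence partitions $\cp_n^{(k)} = \{0=t_0^{(n,k)}<\cdots<t_{m_{n,k}}^{(n,k)}=T\}$ of mesh at most $\der_k$ on whose subintervals $f_n$ has oscillation less than $1/k$. Since $m_{n,k}\leq 1+T/\der_k$ is bounded in $n$, a diagonal extraction yields a subsequence along which, for every $k$, the cardinality $m_{n,k}$ stabilizes to some $M_k$, the partition points $t_i^{(n,k)}$ converge to limits $t_i^{(k)}$, and (using the uniform sup bound and Bolzano--Weierstrass) the values $f_n(t_i^{(n,k)})$ converge to some $v_i^{(k)}$. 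Define the step function $g_k$ taking value $v_i^{(k)}$ on $[t_{i-1}^{(k)}, t_i^{(k)})$. I would then show that $\{g_k\}$ is $d_S$-Cauchy by comparing the $k$ and $\ell$ scales through a common time change, obtain a limit $g\in D$ (membership in $D$ following from $w_g'(\der_k)\leq 1/k$), and finally exhibit piecewise-linear time changes $\la_n^{(k)}$ sending $\cp_n^{(k)}$ onto the limiting partition $\cp^{(k)}$ so that $\sup_t|f_n(\la_n^{(k)}(t))-g_k(t)|$ is controlled by $1/k$ plus an error vanishing in $n$, establishing $f_n\to g$ in $d_S$ along the subsequence.

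The main obstacle is the last time-change construction when partition points collide in the limit, i.e.\ $t_{i}^{(k)} = t_{i-1}^{(k)}$ for some $i$, since a strictly increasing continuous $\la$ cannot collapse a nondegenerate interval to a point. The remedy is to prune such pieces from $\cp_n^{(k)}$ for all large $n$ before building $\la_n^{(k)}$: the oscillation over any pruned block is already bounded by $1/k$, so pruning does not worsen the approximation, and the surviving endpoints are uniformly separated, which makes it straightforward to assemble $\la_n^{(k)}$ with $\sup_t|\la_n^{(k)}(t)-t|\to 0$. A secondary technical point is the Lipschitz-type estimate for $w_f'$ under $d_S$-perturbation used in the necessity step, which must be verified carefully because the partition achieving $w_f'$ need not be the same after time reparametrization.
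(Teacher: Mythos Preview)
The paper does not supply its own proof of this statement; it is quoted as a standard preliminary result, with the surrounding discussion pointing the reader to Chapter~14 of Billingsley. There is therefore nothing in the paper to compare your argument against except the classical proof, and your outline is precisely that argument: total boundedness for the necessity, and a diagonal extraction onto step functions together with piecewise-linear time changes matching partition points for the sufficiency, including the standard pruning of collapsing subintervals.

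One internal inconsistency is worth flagging. You describe the partitions $\cp_n^{(k)}$ as having ``mesh at most $\der_k$'' and then assert $m_{n,k}\leq 1+T/\der_k$. That cardinality bound holds only when each subinterval has length \emph{at least} $\der_k$, which is Billingsley's actual constraint in the definition of $w_f'(\der)$; the paper's displayed definition (infimum over $\|\cp\|\leq\der$ with $\|\cp\|$ the maximum subinterval length) appears to be a slip, and you have tacitly corrected it in one place but not the other. With the intended Billingsley convention your diagonal argument goes through; with the paper's literal definition the bound on $m_{n,k}$ is false and the extraction would need to be reorganized.
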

\begin{remark}
It can be shown that $D$ is not a complete space with respect to the Skorohod distance $d_S$ but there exists a topologically equivalent metric $d_0$ with respect to which $D$ is complete.
\end{remark}

\subsection{Counting measure.} 
Let $(\oom, \cf, \mathbfcal{F}= ( \cf_t )_{t \geq 0}, P)$ be a filtered probability space. Let $(N_t)_{t \geq 0}$ be a point process given by a sequence $(T_n)_{n \geq 0}$ of jump times, that is
$$
N_t = \sum_{i=1} \mathbf{1}_{\{T_i \leq t\}}.
$$
Suppose in addition the $n^{\textnormal{th}}$ jump time or arrival $T_n$ has a corresponding random variable $Z_n$ taking values in some measurable space $(E, \ce)$. Then $(T_n, Z_n)_{n \geq 1}$ is called an $E$-marked point process. For each $A \in \ce$, let the counting process $N_t(A)$ be given by
$$
N_t(A) = \sum_{i=1}^{\infty} \mathbf{1}_{\{Z_n \in A\}} \mathbf{1}_{\{T_n \leq t\}},
$$
and the corresponding counting measure $p(dt \times dz)$ by
$$
p(\om, (0,t] \times A) = N_t(\om, A).
$$
This means that for functions $H: \oom \times [0,\infty) \times \R \mapsto \R$
\beq\label{eq:count-mre-rep}
\int_0^t \int_{\R} H(\om, u, x) p(\om, du \times dx) = \sum_{i=1}^{\infty} H(\om, T_i(\om), Z_i(\om)) \mathbf{1}_{\{T_i(\om) \leq t\}}.
\eeq
For a point process $(N_t)_{t \geq 0}$, its intensity with respect to a given filtration $(\cf_t)_{t \geq 0}$ is given by
$$
\la_t = \lim_{\der \downarrow 0} P(N(t+\der t) - N(t) \vert \cf_t), \quad t > 0.
$$ 
If $(Z_n)_{n \geq 1}$ and $(T_n)_{n \geq 1}$ are independent, and $(Z_n)_{n \geq 1}$ are independent and identically distributed (iid) from a distribution with density $\nu$, then it is easy to see that the intensity of the marked point process $N_t(A)$ for some $A \in \ce$ is given by
\beq\label{eq:intensity-1}
\la_t(A) = \la_t \nu(A).
\eeq
We now say that $p(dt \times dz)$ admits the intensity kernel $\la_t \nu(dz)$. Let $\cp(\mathbfcal{F})$ denote the predictable $\si$-field on $\oom \times (0, \infty)$. Then for any mapping $H: \oom \times (0, \infty) \times E \mapsto \R$, measurable with respect to $\cp(\mathbfcal{F}) \otimes \ce$ satisfies the following projection result (cf. \cite[T3 Theorem, pp 235]{bremaud})
\beq\label{eq:proj}
E\lc \int_0^{\infty} \int_E H(s,z)p(ds \times dz) \rc = E \lc \int_0^{\infty} \int_E H(s, z) \la_s \nu(dz) ds  \rc.
\eeq
Thus defining the compensated measure 
$$
q(ds \times dz) = p(ds \times dz) - \la_s \nu(dz) ds,
$$
we have for every $H$ as in \eqref{eq:proj} that $\int_0^t \int_E H(s,z) q(ds \times dz)$ is a $(P, \cf_t)$ local martingale.

\subsection{Notations} We denote convergence in probability by $\stackrel{p}{\rightarrow}$, convergence uniformly on compact intervals and in probability by $\stackrel{ucp}{\rightarrow}$.

\section{Model and Results}
\subsection{Description of model.} We now introduce our model along with a useful representation of our main quantity of interest.
\begin{assumption}\label{assum:model}
Consider a $M_t/G/n/n$ loss model; namely, a queueing model with 
\begin{enumerate}
\item[i.] a non-homogeneous Poisson arrival process $A^n$ with rate $n \la$, where $\la$ is locally integrable;
\item[ii.] general service times sampled iid from a distribution $F$ with density $\nu$; and,
\item[iii.] $n$ servers and zero buffer.
\end{enumerate}
\end{assumption}

Let $\mathbf{R} = (R_i)_{i \geq 1}$ be the marked process where $R_i = (T_i, S_i)$, $T_i$ are the arrival time epochs corresponding to the arrival process $A^n$, and $S_i$ denotes the corresponding service time sampled iid from $F$. We assume that relevant random variables for every $n$ sit in a common filtered probability space $(\Omega, \cf, \mathbfcal{F}=(\cf_t)_{t>0}, \bp)$. Let $p^R(du, dx)$ denote the counting measure associated with the process $R$. Recall from \eqref{eq:count-mre-rep}, $p^R(du, dx)$ is a random measure on $[0, \infty) \times \R^{+}$ such that for functions $W:\Omega \times[0, \infty) \times \R \mapsto \R$ we have:
\beq\label{eq:R-mre}
\int_0^t \int_{\R} W(\om; u, x) p^R(du, dx) = \sum_{i=1}^{\infty} W(\om; T_i(\om), S_i(\om)) \mathbf{1}_{\{T_i(\om)\leq t\}}.
\eeq
Moreover note from \eqref{eq:intensity-1}, $p^R$ is a random measure with intensity 
\beq\label{eq:compensator}
p_c^R (du, dx) = n \la_u \nu(x) du dx.
\eeq
Denote $p_{\ast}^R$ to be the compensated random measure:
\beq\label{eq:p*}
p_{\ast}^R = p^R-p_c^R.
\eeq
\begin{remark}
We explain the need for Poisson arrival processes in our considerations. In the sequel we would need {finiteness of} the second moment {of stochastic integrals of bounded predictable processes with respect to the compensated measure, that is}
\beq\label{eq:2nd-moment}
\be {\lp \int_0^t \int_{\R} W(u,x)p_{R}^{\ast}(du,dx) \rp}^2< \infty
\eeq 
for a bounded predictable process $W$. This is well established when $p_{\ast}^R$ results from Poisson arrivals. However, we note that this is the only crucial requirement and all the results stated in this article hold true for any arrival process satisfying \eqref{eq:2nd-moment}.
\end{remark}
\subsection{Fraction of occupied servers}
Let $\rho_t^n$ denote the fraction of occupied servers at time $t$. Observe that the number of busy servers at time $t$ is the cumulative sum of arrivals at times $u$, $u \in [0,t]$ satisfying:
\begin{enumerate}
\item[(i)] the number of occupied servers at time $u$ is less than $n$.
\item[(ii)] the corresponding service requirement exceeds $t-u$.
\end{enumerate} 
Consequently we have:
\beq\label{eq:rho-1}
\rho_t^n = \dfrac{1}{n} \sum_{i=1}^{\infty} \mathbf{1}_{\{\rho_{T_i}^n < 1\}} \mathbf{1}_{\{S_i > t-u\}} \mathbf{1}_{\{T_i \leq t\}}
\eeq
Using \eqref{eq:R-mre}, the right hand side of \eqref{eq:rho-1} can be expressed as a stochastic integral with respect to the counting measure $p^R$:
\beq\label{eq:rho-2}
\rho_t^n = \int_0^t \int_{\R} W_n({t},u,x) p^R(du, dx),
\eeq
where
$$
W_n({t},u,x) = \dfrac{1}{n} \mathbf{1}_{\{\rho_{u-}^n < 1\}}\mathbf{1}_{\{u<t\}} \mathbf{1}_{\{x > t-u\}},
$$
is a predictable process. 
\begin{remark}
Note that $\rho^n$ has paths of finite variation on compacts. Indeed, the process $\rho^n$ is piecewise constant with jumps corresponding to arrivals according to $A^n$ only if the current state $\rho^n$ is less than one. This means that the total variation of $\rho^n$ is bounded by that of $A^n$ which is a non-homogeneous Poisson process and hence is of finite variation. In addition, $\rho^n$ is adapted and c\`adl\`ag, and consequently by \cite[Theorem 26]{protter} $\rho^n$ is a pure jump quadratic semimartingale.
 \end{remark}
We now state a functional fluid limit for $\rho^n$ as $n$ tends to infinity.

\begin{theorem}\label{thm:fluid-lim}
Let the conditions in Assumption~\ref{assum:model} hold. Then for any $T>0$ we have:
\beq\label{eq:fluid-lim}
\lim_{n \to \infty} \sup_{t \in [0,T]} \lln \rho_t^n - \rho_t \rrn = 0, \textnormal{almost surely},
\eeq      
where $\rho$ is the solution to a non-linear Volterra integral equation:
\beq\label{eq:rho}
\rho_t = \int_0^t \mathbf{1}_{\{\rho_{u-}< 1\}} \bar{F}(t-u) \la_u du \quad\text{   for   } t> 0\quad\text{ and }\rho_0=0.
\eeq
\end{theorem}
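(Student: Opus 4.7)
The strategy is to decompose $\rho^n$ into its compensator drift plus a stochastic fluctuation, show the fluctuation vanishes uniformly in $t$, extract a convergent subsequence of the drift via equicontinuity, and close the argument with uniqueness of the limiting Volterra equation. Using \eqref{eq:compensator}--\eqref{eq:p*}, I decompose $\rho_t^n = Y_n(t) + M_n(t)$ where
$$
Y_n(t) = \int_0^t \mathbf{1}_{\{\rho^n_{u-} < 1\}} \bar{F}(t-u)\la_u\,du, \qquad M_n(t) = \int_0^t \!\! \int_\R W_n(t,u,x)\,p_{\ast}^R(du,dx).
$$

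For the fluctuation, the It\^o isometry for integrals against the compensated Poisson random measure (a consequence of the projection identity \eqref{eq:proj}) yields the pointwise bound $\E[M_n(t)^2] \le \tfrac{1}{n}\int_0^T \la_u\,du$, since $W_n^2 \le n^{-2}\mathbf{1}_{\{x > t-u\}}$ integrates against $p_c^R$ to $\tfrac{1}{n}\int_0^t \bar F(t-u)\la_u\,du$. To promote this to $\sup_{t \in [0,T]}|M_n(t)| \to 0$ almost surely, I would mollify the discontinuous indicator $\mathbf{1}_{\{x > t-u\}}$ by a smooth monotone proxy $\phi_\ep$; the regularized $M_n^\ep$ is Lipschitz in $t$ with an $L^2$-bounded derivative, which gives uniform-in-$t$ control by a grid-plus-oscillation argument, while the mollification error is controlled by the second-moment content on the narrow slab $\{|u+x-t|\le\ep\}$, which is itself $O(\ep/n)$. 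A balanced choice $\ep = \ep_n \downarrow 0$ combined with Chebyshev and Borel--Cantelli along a dyadic subsequence of $n$ upgrades this to the almost-sure statement.

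The drift family $(Y_n)$ takes values in $[0,1]$ and is equicontinuous: for $s\le t$, $|Y_n(t) - Y_n(s)| \le \int_0^s (F(t-u) - F(s-u))\la_u\,du + \int_s^t \la_u\,du$, which vanishes with $|t-s|$ uniformly in $n$ by absolute continuity of $F$ and local integrability of $\la$. Arzel\`a--Ascoli therefore produces a subsequence $Y_{n_k} \to \rho$ uniformly, and combining with the vanishing fluctuation gives $\rho^{n_k}\to\rho$ uniformly, almost surely. Identifying $\rho$ as a solution of \eqref{eq:rho} uses mollification a second time, now applied to the discontinuous $\mathbf{1}_{\{\cdot < 1\}}$: replace it by a Lipschitz $\psi_\eta$, pass to the limit in the resulting continuous equation (where dominated convergence applies cleanly to $\psi_\eta(\rho^{n_k}_{u-}) \to \psi_\eta(\rho_{u-})$), and then send $\eta \downarrow 0$ to recover \eqref{eq:rho}. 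For uniqueness I proceed inductively on switching intervals: on $[0, t_1)$ with $t_1 := \inf\{t: \rho(t) = 1\}$, \eqref{eq:rho} reduces to the plain linear Volterra $\rho(t) = \int_0^t \bar F(t-u)\la_u\,du$, which uniquely fixes both $\rho$ and $t_1$; on each subsequent interval between consecutive entries into and exits from the fully-occupied level, the same localization argument pins down the next switching time, and a recursion exhausts $[0, T]$. Since every subsequential uniform limit of $\rho^n$ equals this unique $\rho$, the full sequence converges.

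The main obstacle is the uniform vanishing of $M_n$: because $t$ enters the integrand through the discontinuous $\mathbf{1}_{\{x > t-u\}}$, the map $t \mapsto M_n(t)$ is not a martingale and Doob's maximal inequality is not directly available. The mollification trick is the essential device that replaces the discontinuous $t$-dependence by a Lipschitz one and yields enough $t$-regularity for a uniform estimate; the same idea, now applied to the other discontinuous indicator $\mathbf{1}_{\{\cdot < 1\}}$, handles the identification of the limit as a fixed point of the continuous mollified equations.
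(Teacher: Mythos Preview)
Your architecture matches the paper's: same compensator decomposition $\rho^n = X^n + \gamma^n$, same pointwise $L^2$ bound $\E[X^n_t]^2 \le \tfrac{1}{n}\int_0^T\lambda_u\,du$, same equicontinuity estimate for $\gamma^n$, same mollification device for the identification step, and the same inductive switching-time argument for uniqueness. The paper handles the uniform vanishing of $M_n$ more economically than your mollify-and-grid plan: it simply observes that $X^n = \rho^n - \gamma^n$ is the difference of a piecewise-constant process with finitely many jumps on $[0,T]$ (so $w'_{\rho^n}(\delta)=0$) and an equicontinuous one, which gives tightness in the Skorokhod topology for free; combined with the pointwise $L^2\to 0$ this yields $X^n \stackrel{ucp}{\to} 0$ without touching the $t$-dependence of the integrand at all.

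There is, however, a genuine gap in your identification step. Replacing $\mathbf 1_{\{\cdot<1\}}$ by a fixed Lipschitz $\psi_\eta$, passing $k\to\infty$ via dominated convergence, and then sending $\eta\downarrow 0$ gives you only the one-sided inequality $\rho_t \ge \int_0^t \mathbf 1_{\{\rho_{u-}<1\}}\bar F(t-u)\lambda_u\,du$. The reverse inequality fails because the residual error $\int_0^t\bigl(\mathbf 1_{\{\rho^{n_k}_{u-}<1\}}-\psi_\eta(\rho^{n_k}_{u-})\bigr)\bar F(t-u)\lambda_u\,du$ is supported on $\{1-\eta\le \rho^{n_k}_{u-}<1\}$, and after $k\to\infty$ this produces a term comparable to $\int_0^t \mathbf 1_{\{\rho_u=1\}}\lambda_u\,du$, which is in general strictly positive (the fluid does sit at level~$1$). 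The paper's remedy is the observation you do not use: since $\rho^n$ takes values in $\{i/n:0\le i\le n\}$, one has the exact identity $\mathbf 1_{\{\rho^n_{u-}<1\}}=\mathbf 1^{1/n}(\rho^n_{u-})$ for a mollifier of width $1/n$. Letting the mollifier width shrink with $n$ removes the error entirely; the paper then invokes weak-$*$ compactness of the (bounded) sequence $\mathbf 1^{1/n}(\rho^n_{\cdot-})$ in $L^\infty[0,t]$ to produce a limit $w$ with $\rho_t=\int_0^t w(u)\bar F(t-u)\lambda_u\,du$, shows that $\rho$ is therefore continuous (so Skorokhod convergence upgrades to uniform), and finally sandwiches $\underline{\mathbf 1}^{\ep}(\rho_{u-})\le \mathbf 1^{1/r_k}(\rho^{r_k}_{u-})\le \mathbf 1_{\{\rho_{u-}<1\}}$ to force $\int w h=\int \mathbf 1_{\{\rho<1\}}h$ for all nonnegative $h\in L^1$. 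Without either this discreteness trick or an independent argument that any $[0,1]$-valued $w$ with $\rho_t=\int_0^t w\bar F\lambda$ and $w\ge \mathbf 1_{\{\rho<1\}}$ must coincide with $\mathbf 1_{\{\rho<1\}}$ a.e., your identification does not close.
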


\begin{proof}
Recall that the counting measure $p^R$ posseses a compensator $p_c^R$ given by \eqref{eq:compensator}. Now, observe that from \eqref{eq:rho-1} and \eqref{eq:p*} the fraction of occupied servers $\rho^n$ has the following decomposition:
\begin{align*}
\rho_t^n &= \int_0^t \int_{\R} W_n({t},u,x)p_{\ast}^R(du, dx) + \int_0^t \int_{\R} W_n({t},u,x) p_c^R(du, dx)\\
	      &= \int_0^t \int_{\R} W_n({t},u,x)p_{\ast}^R(du, dx) + \int_0^t \mathbf{1}_{\{\rho_u^n < 1\}} \bar{F}(t-u) \la_u du\\
	      &=: X_t^n + \ga_t^n,
\end{align*}
where $p_{\ast}^R$ is given by \eqref{eq:p*} and $X^n$ is another bounded cadlag semimartingale. Indeed, $X^n$ is the difference of a bounded cadlag semimartingale $\rho^n$ and the bounded finite variation process $\ga^n$. In fact, we have
$$
\sup_n \sup_t |X_t^n| \leq 1+ \int_0^T \la_u du,
$$
where the second quantity is finite because of our assumption that $\la$ is locally integrable according to Assumption~\ref{assum:model}. In addition we have almost surely
$$
\lim_{\der \downarrow 0} \sup_n w_{X^n}^{'} (\der) = 0,
$$
where $w_{x}^{'}(\der) = \inf_{t_i} w_x[t_i, t_{i+1})$. This follows from the facts that $w_{\rho^n}^{'}(\der) = 0$ (almost surely $\rho^n$ has finitely many jumps in $[0,T]$ and is constant in between), and the fact that for all $n$ one must have $\lim_{\der \downarrow 0} \sup_n w_{\ga^n}(\der) = 0$. In order to obtain this last assertion observe that $|\ga_t^n - \ga_s^n| \leq 2 \int_{s}^t \la_u du \leq 2 w_{\laa}(|s-t|)$ where $\laa(t) = \int_0^t \la_u du$ is continuous on $[0,T]$ and hence also uniformly continuous.

We thus have that $\{X^n\}_{n \geq 1}$ has compact closure in the Skorohod topology. In other words we have obtained tightness.

Next, fix $t$ and obtain (cf. \cite[Theorem 2.3.7]{applebaum}):
\begin{multline*}
\E {\lp X_t^n \rp}^2 = \E \lc \int_0^t \int_{\R} W_n^2(t,u,x) p_{\ast}^R(du, dx) \rc  \leq \dfrac{1}{n^2} \int_0^t \int_{\R} p_c^R(du,dx) = \dfrac{1}{n} \int_0^t \la_u du
\leq \dfrac{1}{n} \laa \longrightarrow 0. 
\end{multline*}
Consequently for each $t \in [0,T]$, $X_t^n \stackrel{p}{\rightarrow} 0$. Thus for any $(t_1,t_2,\ldots,t_d) \in [0,T]^d$, the finite dimensional vectors $(X_{t_1}^n, \ldots, X_{t_n}^n) \stackrel{p}{\longrightarrow} (0, \ldots, 0)$ as a consequence of the Cramer-Wold device~\cite[Theorem 7.7]{billingsley}. Recalling the tightness condition we have thus obtained that $X^n$ converges in distribution to the constant zero function and hence also in probability. Since the limiting function is non-random, the convergence is also in probability under the uniform topology. Thus we have:
$$
X^n \stackrel{ucp}{\longrightarrow} 0.
$$

\noindent
Observe that we have
\beq\label{eq:rho^n}
\rho_t^n = X_t^n + \ga_t^n = X_t^n + \int_0^t \mathbf{1}_{\{\rho_{u-}^n < 1\}} \bar{F}(t-u) \la_u du.
\eeq
For every $\om \in \oom$, $\rho^n$ by definition belongs to the Skorohod space $D[0,T]$. {Furthermore there exists $\oom_1 \subset \oom $} such that $P(\oom_1)=1$ and for every $\om \in \oom_1$ the sequence $\{\rho^n(\om)\}_{n \geq 1}$ has compact closure in the Skorohod topology because 
\begin{enumerate}
\item[(i)] $\sup_{n} \sup_{t} \lln \rho_t^n \rrn \leq 1$.
\item[(ii)] $\lim_{\der \downarrow 0} \sup_{n} w_{\rho^n}^{'}(\der) = 0$,
\end{enumerate}
where 
$$
w_{x}^{'} (\der) = \inf_{t_i} w_{x} [t_i, t_{i+1}),
$$
and $w_x$ is the modulus of continuity of $x$ in $[t_i, t_{i+1})$. Note that item (ii) is true almost surely because almost surely $\rho$ will have finitely many jumps in the time horizon $[0,T]$.

Consider any $\om \in \oom_1$. The above considerations thus show that for every subsequence $n_k$ of the naturals, $\rho^{n_k}$ has a convergent subsequence which converges to an element of $D$. That is, there exists a subsequence $\{m_k\} \subset \{n_k\}$ such that 
\beq\label{eq:rho^n-conv}
\rho^{m_k} \rightarrow \rho,~ \text{in the Skorohod topology}.
\eeq 
 Henceforth, we try to identify $\rho$. To that attempt, we give a slightly different representation of $\rho^n$. Observe that the set $\{\rho^n_{u-} <  1\}$ is identical to the set $\{ \rho^n_{u-} \leq 1-\frac{1}{n}\}$. This is because $\rho^n$ only takes values in $\{\frac{i}{n}: i=1,\ldots,n \}$. This gives us the opportunity to replace the indicator $\mathbf{1}_{\{\rho_{u-}^n < 1\}}$ in \eqref{eq:rho^n} by a smooth approximation. In particular consider a sequence of smooth functions $\mathbf{1}^{d}:\R \to [0,1]$ for $d \in (0,1)$ such that 
$$
\mathbf{1}^d(x) = 
\begin{cases}
1, &\text{for }x\leq 1- \frac{2d}{3},\\
0 &\text{for }x \geq 1-\frac{d}{3}.
\end{cases}
$$
In addition let $\underline{\mathbf{1}}^{d} : \R \mapsto [0,1]$ for $d \in (0,1)$ de defined as:
$$
\underline{\mathbf{1}}^{d} (x) = 
\begin{cases}
1 &\text{for }x \leq 1 -d\\
0 &\text{for }x > 1-d.
\end{cases}
$$
Using this notation we can replace $\mathbf{1}_{\{\rho_{u-}^n < 1\}}$ in \eqref{eq:rho^n} by $\mathbf{1}^{\frac{1}{n}}(\rho_{u-}^n)$ as both the quantities are the same. Thus our alternate representation of $\rho^n$ is given by:
$$
\rho_{t}^n = X_t^n + \int_0^t \mathbf{1}^{\frac{1}{n}}(\rho_{u-}^n) \bar{F}(t-u) \la_u du.
$$
Fix any arbitrary $t \in [0, T]$. Since $L^1[0,t]$ is a separable Banach space with dual $L^{\infty}[0,t]$ and $\mathbf{1}^{\frac{1}{m_k}}(\rho^{m_k})$ is bounded in $L^{\infty}[0,t]$, there is a subsequence $\{l_k\} \subset \{m_k\}$, where $m_k$ is as in \eqref{eq:rho^n-conv} such that
\beq\label{eq:weak*}
\lim_{k \to \infty} \int_0^t h(u) \mathbf{1}^{\frac{1}{l_k}}(\rho_{u-}^{l_k}) du = \int_0^t h(u) w(u) du.
\eeq
for every $h \in L^1[0,t]$. In particular, let us consider the function $h$ given by:
$$
h(u) = \bar{F}(t-u)\la_u.
$$
Observe that our assumption on $\la$ ensures that this specific $h$ lies in $L^1[0,t]$. We thus have:
$$
\lim_{k \to \infty} \int_0^t  \mathbf{1}^{\frac{1}{l_k}}(\rho_{u-}^{l_k})\bar{F}(t-u)\la_u = \int_0^t w(u)\bar{F}(t-u)\la_u du,
$$
Recall that $X^n \stackrel{ucp}{\longrightarrow} 0$. Consequently there exists a subsequence $\{r_k\} \subset \{l_k\}$, (which we conveniently choose to be a subset of $\{l_k\}$) such that 
$$
{\|X^{r_k}\|}_{T} \longrightarrow 0,
$$
for every $\om \in \oom_2$, where $\oom_2 \subset \oom_1$ satisfies $P(\oom_2) = 1$. For this sequence $\{r_k\}$ we thus obtain:
$$
\lim_{k \to \infty} \rho^{r_k}_t = \lim_{k \to \infty} \lp X^{r_k}_t + \int_0^t \mathbf{1}^{\frac{1}{r_k}}(\rho_{u-}^{r_k}) \bar{F}(t-u)\la_u \rp = \int_0^t w(u)\bar{F}(t-u)\la_u du.
$$
Due to \eqref{eq:rho^n-conv} we must then have:
$$
\rho_t = \int_0^t w(u) \bar{F}(t-u)\la_u du.
$$
Observe that the above representation guarantees that $\rho$ is continuous and the convergence stated in \eqref{eq:rho^n-conv} is in the uniform topology for each $\om \in \oom_2$. Consequently fix $\ep > 0$ and choose $N$ large enough such that for all $k > N$ we have $r_k > \frac{3}{\ep}$ and ${\|\rho^{r_k} - \rho\|}_T < \frac{\ep}{3}$. Then it is readily checked that
$$
\underline{\mathbf{1}}^{\ep} (\rho_{u-}) \leq \mathbf{1}^{\frac{1}{r_k}}(\rho_{u-}^{r_k}) \leq \mathbf{1}_{\{\rho_{u-} < 1\}}.
$$
Consider any $h \geq 0$ such that $h \in L^1[0,T]$. Let us multiply each side of the above equation by $h$ and integrate. We thus obtain:
$$
\int_0^t h(u) \underline{\mathbf{1}}^{\ep} (\rho_{u-})du \leq \int_0^t h(u) \mathbf{1}^{\frac{1}{r_k}}(\rho_{u-}^{r_k})du \leq \int_0^t h(u) \mathbf{1}_{\{\rho_{u-} < 1\}}du.
$$
Note that 
$$
\lim_{d \downarrow 0} \underline{\mathbf{1}}^{d}(x) = \lim_{d \downarrow 0} \mathbf{1}^d(x) = \mathbf{1}_{\{x < 1\}} .
$$
Consequently taking $k \to \infty$ (this is okay as we need $k > N=N(\ep)$) and then $\ep \downarrow 0$ we have by the dominated convergence theorem and  \eqref{eq:weak*} that:
$$
\int_0^t h(u) \mathbf{1}_{\{ \rho_{u-} < 1 \}} du \leq \int_0^t w(u) h(u) du \leq \int_0^t h(u) \mathbf{1}_{\{ \rho_{u-} < 1 \}} du.
$$
We now take $h= \bar{F}(t-\cdot)\la$ and thus we conclude that
$$
\rho_t = \int_0^t \mathbf{1}_{\{\rho_{u-} < 1\}}\bar{F}(t-u) \la_u du.
$$
Observe that our considerations above hold for any $t \in [0,T]$ and hence \eqref{eq:rho} holds true for any $t \in [0,T]$.
\end{proof}

\begin{remark}
Our considerations so far as stated in Assumption~\ref{assum:model} are constrained on systems which start empty, that is, $\rho_0^n = 0$, for all $n$. However this is easily relaxed as stated in the following corollary which holds under the following assumption
\end{remark}

\begin{assumption}\label{assum:model-2}
Let the conditions under Assumption~\ref{assum:model} hold. In addition let the initial fraction of occupied servers $\rho_0^n$ satisfy the following asymptotic result:
$$
\lim_{n \to \infty} \lln \rho_0^n - r_0 \rrn = 0, \quad \textnormal{almost surely,}
$$
where $r_0 \in (0,1]$. Moreover, assume that the remaining service times for each of these occupied servers are iid drawn from a distribution $G$.
\end{assumption}

\begin{corollary}\label{cor:fluid-lim-2}
Let the conditions in Assumption~\ref{assum:model-2} hold. Then for any $T>0$ we have:
\beq\label{eq:fluid-lim-2}
\lim_{n \to \infty} \sup_{t \in [0,T]} \lln \rho_t^n - \rho_t \rrn = 0, \quad \textnormal{almost surely},
\eeq      
where $\rho$ is the solution to a non-linear Volterra integral equation:
\beq\label{eq:rho-2}
\rho_t = \rho_0 \bar{G}(t) + \int_0^t \mathbf{1}_{\{\rho_{u-}< 1\}} \bar{F}(t-u) \la_u du \quad\text{   for   } t> 0\quad\text{ and }\rho_0=r_0.
\eeq
\end{corollary}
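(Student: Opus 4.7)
The plan is to decompose $\rho_t^n$ into the contribution from the jobs initially present in the system and the contribution from new arrivals in $[0,t]$, and then apply essentially the same semimartingale/mollifier machinery used in the proof of Theorem~\ref{thm:fluid-lim} to the second piece, while handling the first piece with a classical Glivenko--Cantelli argument. Concretely, let $\tilde S_1,\ldots,\tilde S_{n\rho_0^n}$ denote the residual service times of the initially occupied servers (iid $\sim G$ and independent of the arrival stream). I would write
$$
\rho_t^n \;=\; I_t^n + J_t^n, \qquad I_t^n = \frac{1}{n}\sum_{i=1}^{n\rho_0^n} \mathbf{1}_{\{\tilde S_i > t\}}, \qquad J_t^n = \int_0^t \int_{\R} W_n(t,u,x)\, p^R(du,dx),
$$
with $W_n$ unchanged from the proof of Theorem~\ref{thm:fluid-lim}, except that the indicator $\mathbf{1}_{\{\rho_{u-}^n<1\}}$ is now interpreted with respect to the total $\rho^n = I^n + J^n$.

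First I would handle $I^n$. Writing $I_t^n = \rho_0^n \cdot \hat G_{n\rho_0^n}(t)$ where $\hat G_m(t) = \tfrac{1}{m}\sum_{i\leq m}\mathbf{1}_{\{\tilde S_i > t\}}$ is the empirical survival function of the residuals, the classical Glivenko--Cantelli theorem gives $\sup_{t \in [0,T]}|\hat G_{n\rho_0^n}(t)-\bar G(t)| \to 0$ almost surely, and together with $\rho_0^n \to r_0$ from Assumption~\ref{assum:model-2} this yields
$$
\sup_{t\in[0,T]} \lln I_t^n - r_0\bar G(t)\rrn \longrightarrow 0 \quad \text{almost surely.}
$$
Next I would decompose $J_t^n = X_t^n + \gamma_t^n$ with $X_t^n = \int_0^t\!\int_{\R} W_n(t,u,x)\, p_\ast^R(du,dx)$ and $\gamma_t^n = \int_0^t \mathbf{1}_{\{\rho_{u-}^n<1\}} \bar F(t-u)\la_u\,du$, exactly as in the theorem. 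The martingale $L^2$ bound, the uniform bound $\sup_n\sup_t |X_t^n| \leq 1 + \int_0^T \la_u du$, and the modulus-of-continuity estimate on $\gamma^n$ carry over without change, giving $X^n \stackrel{ucp}{\to} 0$.

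To establish tightness of $\{\rho^n\}$ in the Skorohod topology, I would note that $I^n$ is a non-increasing step function bounded by $1$ with a.s.\ finitely many jumps in $[0,T]$, so $w_{I^n}'(\delta)=0$ for $\delta$ smaller than the minimum inter-jump gap, while $\{J^n\}$ satisfies the same compactness criteria verified in the theorem. Hence along an almost sure event of full measure, every subsequence of $\rho^n$ has a further subsequence $\rho^{m_k}$ converging in $D[0,T]$ to some limit $\rho$.

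Identification of the limit then proceeds exactly by the mollifier/weak-$\ast$ argument of Theorem~\ref{thm:fluid-lim}, applied to the total $\rho^n$: replace $\mathbf{1}_{\{\rho_{u-}^n<1\}}$ inside $\gamma^n$ by the smooth $\mathbf{1}^{1/n}(\rho_{u-}^n)$, extract a further subsequence along which $\mathbf{1}^{1/l_k}(\rho_{u-}^{l_k})$ converges weak-$\ast$ in $L^\infty[0,t]$ to some $w$, and then use the squeeze $\underline{\mathbf{1}}^\ep(\rho_{u-}) \leq \mathbf{1}^{1/r_k}(\rho_{u-}^{r_k}) \leq \mathbf{1}_{\{\rho_{u-}<1\}}$ (valid once $\|\rho^{r_k}-\rho\|_T < \ep/3$) together with the dominated convergence theorem applied to $h(u)=\bar F(t-u)\la_u$ to conclude $w(u)\,du$-a.e.\ equals $\mathbf{1}_{\{\rho_{u-}<1\}}$ against any non-negative $h \in L^1[0,t]$. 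Combining with Step~1 yields
$$
\rho_t \;=\; r_0\bar G(t) + \int_0^t \mathbf{1}_{\{\rho_{u-}<1\}} \bar F(t-u)\la_u\,du,
$$
which is \eqref{eq:rho-2}. Uniqueness of the limit follows by the same argument sketched in the introduction: the first hitting time of level $1$ is uniquely determined, and subsequent exit/re-entry times are then pinned down recursively by \eqref{eq:rho-2}, so every convergent subsequence has the same limit and the full sequence converges.

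The main obstacle I anticipate is subtle rather than technical: when $r_0 = 1$ the initial contribution $r_0\bar G(t)$ keeps the limit $\rho$ pinned at level $1$ on an initial interval of positive length, so the weak-$\ast$ squeeze must be checked carefully on the set where $\rho_{u-}=1$. However, since $\mathbf{1}_{\{\rho_{u-}<1\}}$ and the squeeze bounds disagree only on $\{\rho_{u-}=1\}$, integration against $h=\bar F(t-u)\la_u$ (an $L^1$ function) together with continuity of $\rho$ leave the identification intact.
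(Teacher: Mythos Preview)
Your proposal is correct and follows essentially the same route as the paper: decompose $\rho^n$ into the initial-jobs contribution (handled by Glivenko--Cantelli together with $\rho_0^n \to r_0$) and the new-arrivals contribution (referred back to the machinery of Theorem~\ref{thm:fluid-lim}). If anything you are more careful than the paper's very brief proof, which simply says the second piece ``has already been analyzed''; you correctly flag that the indicator $\mathbf{1}_{\{\rho_{u-}^n<1\}}$ now refers to the total process $I^n+J^n$ and spell out why the tightness and mollifier identification steps still go through.
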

\begin{proof}
Let the initial number of occupied servers be $N_0^n$, so that $\rho_0^n = \frac{N_0^n}{n}$. Let the remaining service times for these $N_0^n$ many jobs be $(S_i^0)_{1\leq i \leq N_0^n}$. Then $\rho_t^n$ can be represented as:
$$
\rho_t^n = \dfrac{1}{n}{\sum_{i=0}^{N_0^n} \mathbf{1}_{\{ S_i^0 > t \}}} + \int_0^t \int_{\R} W_n(t,u,x) p^R(du, dx).
$$
Observing that $N_0^n$ goes to infinity since $r_0 > 0$, we can represent this as follows:
$$
\rho_t^n = {\sum_{i=0}^{N_0^n} \mathbf{1}_{\{ S_i^0 > t \}}}{N_0^n} \rho_0^n + \int_0^t \int_{\R} W_n(t,u,x) p^R(du, dx).
$$
We have already analyzed the second quantity on the right hand side in Theorem~\ref{thm:fluid-lim}. Now using Assumption~\ref{assum:model-2} we have that the first quantity converges almost surely. In particular, we have:
$$
\lim_{n \to \infty} \lln \dfrac{\sum_{i=0}^{N_0^n} \mathbf{1}_{\{ S_i^0 > t \}}}{N_0^n} \rho_0^n - \rho_0 \bar{G}(t) \rrn = 0, \quad \text{almost surely}.
$$
This completes the proof.
\end{proof}

\begin{figure}[h!]
\centering
\includegraphics[width=\linewidth]{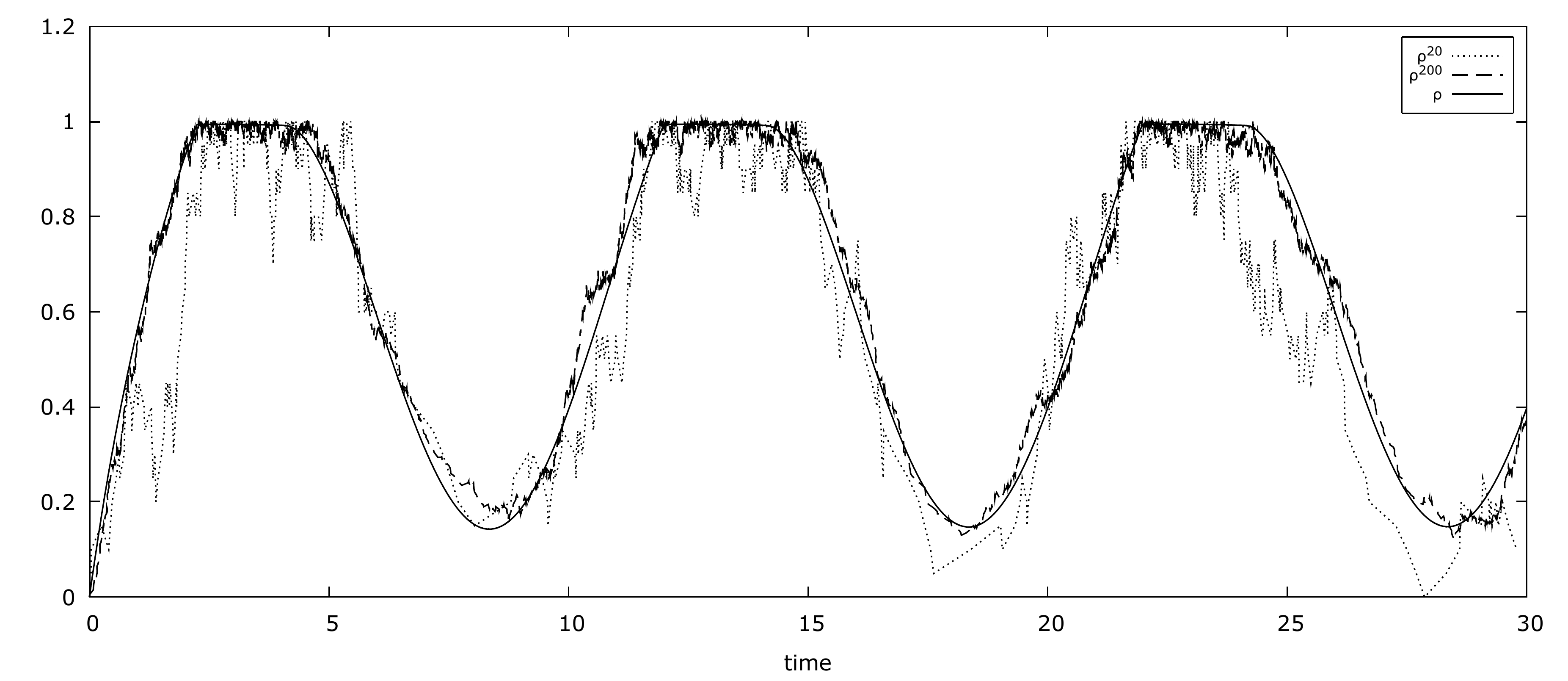}
\caption{\small Convergence of $\rho^n$ to $\rho$ for an initially empty system.}
\par
\scriptsize $\rho^n$ has been simulated for $n=20$ and $n=200$ with service times drawn from $\textrm{Lognormal}(-0.5, 1)$ while the intensity of arrivals is sinusoidal with $\la_u = \frac{2}{3}(1+\sin(\frac{2\pi u}{10}))$. $\rho$ has been approximated using a mollified version of the indicator function in relation \eqref{eq:rho} (note $\rho$ doesn't reach $1$ as a consequence).
\end{figure}

\subsection{Existence and uniqueness of fluid limit.}
In this subsection we prove the existence and uniqueness of the fluid limits $\rho$ and $\Theta$. 
\begin{theorem}\label{thm:exi+uniq}
For all $r_0 \in [0,1]$ there exists a unique solution to the non-linear Volterra equation given by \eqref{eq:rho-2}. 
\end{theorem}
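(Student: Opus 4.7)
The plan is to establish existence by invoking Theorem~\ref{thm:fluid-lim} (for $r_0 = 0$) and Corollary~\ref{cor:fluid-lim-2} (for $r_0 \in (0,1]$), since the fluid limits constructed there already satisfy \eqref{eq:rho-2}. The main work is uniqueness, which I would approach via the recursive strategy outlined in the introduction: the times at which a solution enters and leaves the saturation level $1$ are intrinsically determined by the equation, and on each phase between such transitions the equation reduces to a linear Volterra equation with unique solution.

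First I would note that any solution $\rho$ of \eqref{eq:rho-2} is automatically continuous on $[0,T]$, since the integrand $\mathbf{1}_{\{\rho_{u-}<1\}}\bar F(t-u)\la_u$ is dominated by the locally integrable $\la_u$. Introducing the unconstrained function $\psi(t) := r_0\bar G(t) + \int_0^t \bar F(t-u)\la_u du$ and the first saturation time $\tau_0 := \inf\{t \in [0,T]:\psi(t) = 1\}$ (with $\inf\emptyset = T$), one sees that $\rho_s < 1$ on $[0,\tau_0)$ for every solution $\rho$, so the indicator is $1$ there and the equation collapses to $\rho = \psi$. This pins $\rho$ down uniquely on $[0,\tau_0)$ and shows $\tau_0$ is a solution-independent quantity.

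For the inductive step, suppose $\rho^1,\rho^2$ are two solutions agreeing on $[0,s]$, and set $\tau := \inf\{t > s : \rho^1_t \neq \rho^2_t\}$. By continuity $\rho^1_\tau = \rho^2_\tau$, and I would examine three mutually exclusive behaviors on a right-neighborhood $(\tau,\tau+\delta)$. (i) If both $\rho^i < 1$ there, the indicators coincide at $1$, the Volterra equation has identical history, and so $\rho^1 = \rho^2$ on this neighborhood. (ii) If both $\rho^i = 1$ there, the indicators coincide at $0$ and the common history again forces $\rho^1 = \rho^2$. (iii) If, say, $\rho^1 = 1$ while $\rho^2 < 1$ on $(\tau,\tau+\delta)$, subtracting the two equations and using that the histories on $[0,\tau]$ agree yields
\begin{equation*}
\rho^1_t - \rho^2_t \;=\; -\int_\tau^t \bar F(t-u)\la_u\,du \;\leq\; 0,
\end{equation*}
which is incompatible with the strict inequality $\rho^1_t - \rho^2_t = 1 - \rho^2_t > 0$. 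Each case contradicts the definition of $\tau$, giving $\rho^1 \equiv \rho^2$ on $[0,T]$. Stated recursively, the endpoints of sub-saturation phases are determined by the linear Volterra equation with known history, while those of saturation phases are characterized by the implicit identity $1 = r_0\bar G(t) + \int_0^t \mathbf{1}_{\{\rho_{u-}<1\}}\bar F(t-u)\la_u\,du$.

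The main obstacle I foresee is ruling out ``mixed'' configurations in which the contact sets $\{\rho^i = 1\}$ differ on non-interval, positive-measure subsets of a right-neighborhood of $\tau$, so that none of (i)--(iii) holds cleanly on a common interval. To handle this, I would exploit the closedness of $\{\rho^i = 1\}$ (from continuity of $\rho^i$) and focus on the first transition times $\sigma_i := \inf\{t > \tau : \rho^i_t \neq \rho^i_\tau\}$: either $\sigma_1 = \sigma_2$, reducing the analysis on $(\tau,\sigma_1)$ to case (i) or (ii), or $\sigma_1 < \sigma_2$ (say), reducing the analysis on $(\sigma_1,\sigma_2)$ to case (iii). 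In either event the definite-sign calculation in (iii) produces a contradiction, closing the argument.
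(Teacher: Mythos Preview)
Your uniqueness strategy---locate the first divergence time $\tau$ of two putative solutions and derive a contradiction on a right-neighborhood---is close in spirit to the paper's, which instead parametrizes any solution by its ordered sub-saturation intervals $[\si_{k-1},\tau_k)$ and shows recursively that each $\si_k,\tau_k$ is a functional of $F,G,\la$ and the earlier intervals alone. Both arguments ultimately rest on the same observation: once the history up to the current saturation/desaturation transition is fixed, the next transition time is uniquely determined. Your contradiction formulation is arguably more direct, but two ingredients that the paper supplies are missing from your sketch and are genuinely needed.

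First, your opening claim that every solution of \eqref{eq:rho-2} is continuous is false when $r_0>0$ and $G$ has atoms: the term $r_0\bar G(t)$ then carries downward jumps, and so does $\rho$. The paper addresses this explicitly (the preimage $\rho^{-1}[0,1)$ is then a countable union of left-closed right-open intervals rather than open ones). Your later use of continuity---e.g.\ to get $\rho^1_\tau=\rho^2_\tau$ and to argue that $\{\rho^i=1\}$ is closed---must be replaced by the weaker fact that any jumps are negative and occur at the same times with the same sizes for both solutions.

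Second, and more importantly, your trichotomy (i)--(iii) tacitly assumes $\rho^i\le 1$, so that ``$\rho^1=1$ on $(\tau,\tau+\delta)$'' is the only alternative to ``$\rho^1<1$''. The paper proves $\rho\in[0,1]$ as its first step, via the monotonicity of $\bar F,\bar G$: if $\rho_{t_0}>1$ one finds $s_0<t_0$ with $\rho_{s_0}=1$ and $\rho\ge 1$ on $(s_0,t_0]$, whence $\rho_{t_0}\le\rho_{s_0}=1$. Without this bound your case (iii) computation does not yield a contradiction, and your obstacle-handling via $\sigma_i$ breaks down. Once $\rho\le 1$ is in hand, the cleanest way to finish your argument is to note that on any interval where both $\rho^i=1$ the common ``history'' function $H(t)=r_0\bar G(t)+\int_0^\tau\mathbf 1_{\{\rho_{u-}<1\}}\bar F(t-u)\la_u\,du$ must itself equal $1$; conversely $\rho^i\ge H$, so the first exit times from $1$ coincide with $\inf\{t>\tau:H(t)<1\}$ and are therefore equal for $i=1,2$. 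This is exactly the paper's $\si_k$ characterization and closes your ``mixed configuration'' obstacle.

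A minor caution on existence: the full conclusion of Theorem~\ref{thm:fluid-lim} (convergence of the entire sequence $\rho^n$) itself relies on uniqueness of \eqref{eq:rho}, so invoking its \emph{statement} is circular. What is legitimate is to invoke its \emph{proof}, which shows that every subsequential limit satisfies \eqref{eq:rho-2}; that suffices for existence, and is indeed how the paper phrases it (``very similar to what we have presented in the proof to Theorem~\ref{thm:fluid-lim}'').
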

\begin{proof}
Existence of a solution is well known and its proof is very similar to what we have presented in the proof to Theorem~\ref{thm:fluid-lim}. Namely, we mollify the discontinuous coefficient $\mathbf{1}_{\{\cdot < 1\}}$ by a smooth version, use existence results for smooth coefficients and then show that the limit satisfies \eqref{eq:rho-2}. See \cite{kiffe} for the existence result in a more general setup, and for a more general definition of solution to nonlinear Volterra equations with discontinuous coefficient. Now
we will show that for all $T>0$, \eqref{eq:rho-2} has a unique solution for $t \in [0,T]$. 
We first show that $\rho$ given by:
$$
\rho_t = \rho_0 \bar{G}(t) + \int_0^t \mathbf{1}_{\{ \rho_{u-} < 1 \}} \bar{F}(t-u) \la_u du,
$$
takes values in $[0,1]$. The fact that $\rho$ is positive is immediate by the positivity of $\rho_0$, $\bar{F}$, $\bar{G}$ and $\la$. Suppose there exists a $t_0 \in [0,T]$ such that $\rho_{t_0} > 1$. Since $\bar{G}$ is non-increasing and 
$$\int_0^t \mathbf{1}_{\{\rho_{u-} < 1\}}\bar{F}(t-u)\la_u du$$ is continuous as a function of $t$, the jumps of $\rho$ if any are negative. Thus there must exist an $s_0 < t_0$ such that $\rho_{s_0} = 1$ and $\rho_s \geq 1$ for $s \in (s_0, t_0]$. Consequently we must have:
$$
\rho_{t_0} = \rho_0 \bar{G}(t_0) + \int_0^{t_0} \mathbf{1}_{\{\rho_{u-}<1\}} \bar{F}(t_0-u) \la_u du =  \rho_0 \bar{G}(t_0) + \int_0^{s_0} \mathbf{1}_{\{\rho_{u-}<1\}} \bar{F}(t_0-u) \la_u du.
$$
However, since $\bar{G}$ and $\bar{F}$ are non-increasing, we have:
$$
 \rho_0 \bar{G}(t_0) + \int_0^{s_0} \mathbf{1}_{\{\rho_{u-}<1\}} \bar{F}(t_0-u) \la_u du \leq  \rho_0 \bar{G}(s_0) + \int_0^{s_0} \mathbf{1}_{\{\rho_{u-}<1\}} \bar{F}(s_0-u) \la_u du = \rho_{s_0}=1
$$
Thus we obtain the contradiction that $\rho_{t_0} \leq 1$. Having obtained that $\rho$ takes values in $[0,1]$ it is easy to  obtain the following hitting times to $1$ and exit times from $1$ for a solution $\rho$. We denote:
\begin{align*}
\text{if }\rho_0 < 1 &\text{ then }\si_0 = 0, \text{ else }\si_0 = \inf \lcl t>0 : \rho_0 \bar{G}(t) < 1 \rcl\\
\tau_1 &= \inf \lcl t > 0: \rho_0 \bar{G}(t) + \int_0^t \bar{F}(t-u)\la_u du = 1 \rcl,\\
\si_1 &= \inf \lcl t > \tau_1: \rho_0 \bar{G}(t) + \int_0^{\tau_1}\bar{F}(t-u) \la_u du < 1 \rcl.
\end{align*}
Here $\tau_1$ denotes the first hitting time from below for a solution $\rho$ and $\si_1$ ($\si_0$) denotes the first exit time from $1$ when the initial condition $\rho_0 < 1$ ($\rho_0 = 1$). The next set of hitting and exit times are defined similarly. For $k \geq 2$ denote:
\begin{multline*}
\tau_k = \inf \lcl t > \si_{k-1}: \rho_0 \bar{G}(t) + \int_{I_{k,t}} \bar{F}(t-u) \la_u du = 1\rcl, \text{ where } I_{k,t} = \cup_{i=1}^{k-1} [\si_{i-1},\tau_i)\cup[\si_{k-1},t).
\end{multline*}
and 
\beq\label{eq:si_k}
\si_k = \inf \lcl t > \tau_{k}: \rho_0 \bar{G}(t) + \int_{{J}_{k}} \bar{F}(t-u) \la_u du < 1 \rcl, \text{ where } J_{k} = \cup_{i=1}^{k} [\si_{i-1},\tau_i).
\eeq
In addition the specific solution $\rho$ can now be actually represented as 
\beq\label{eq:rho-J_kt}
\rho_t = \rho_0 \bar{G}(t) + \int_{J_{t}} \bar{F}(t-u) \la_u du,
\eeq
where 
$$
J_{t} = \cup_{i=1}^{\infty} [\si_{i-1},\tau_i) \cap[0,t].
$$
Let us justify the above representation rigorously. Consider first the case when $G$ is continuous. This yields that $\rho$ must also be continuous. Now observe that the interval $[0,1)$ is open in $[0,1]$ and as such the pre-image of $[0,1)$ with respect to $\rho$:
$$
\rho^{-1} [0,1) = \lcl t \in [0, \infty): \rho_t \in [0,1) \rcl,
$$
is an open set of $[0, \infty)$. Consequently this pre-image can be represented as countable union of open intervals in $[0,\infty)$:
\beq\label{eq:pre-image-decomp}
\rho^{-1} [0,1) = \cup_{k=1}^{\infty} L_k,
\eeq
where $L_k$'s are open intervals in $[0, \infty)$.
If $G$ however is not continuous, the fact that it is right continuous and non-decreasing guarantees, as we have already mentioned before, that $\bar{G}$ has at most countably many jumps of negative size. The addition of this complexity doesn't complicate the pre-image too much. We just have at most countably many $L_k$'s in \eqref{eq:pre-image-decomp} replaced by left-closed right-open intervals, that is:
$$
\rho^{-1} [0,1) = \cup_{k=1}^{\infty} \tilde{L}_k,
$$
where each $\tilde{L}_k$ is either an open or a left-closed right-open interval of $[0, \infty)$. In our considerations above we have denoted $J_{t}$ to be:
$$
J_{t} = \lp \cup_{i=1}^{\infty} \tilde{L}_k \rp \cap [0,t].
$$
Note that for the purposes of obtaining the solution from $J_{t}$ using equation~\eqref{eq:rho-J_kt} we may replace the open intervals in $\{\tilde{L}_k\}_{k \geq 1}$ by left-closed right-open intervals without affecting the solution because the solution would be continuous at the left limit point of the said interval. We have thus obtained a one-one correspondence between solutions of \eqref{eq:rho-2} and the corresponding intervals $\tilde{L}_k = [\si_{k-1}, \tau_k)$ through relation \eqref{eq:rho-J_kt}. Now suppose \eqref{eq:rho-2} admits two solutions $\rho^1$ and $\rho^2$. By the one-one correspondence established these two solutions will differ only if they admit two different countable collection of intervals $\{\tilde{L}_k^1\}_{k \geq 1}$ and $\{\tilde{L}_k^2\}_{k \geq 1}$. Let 
$$
l = \min\{k : \tilde{L}_k^1 \neq \tilde{L}_k^2\}
$$
If $l=1$, deriving a contradiction is straightforward. Indeed, if $\rho_0 < 1$, then it is immediate that the solution would be unique until the first time it hits $1$, and consequently $\si_0=0$ and $\tau_1$ are unique. Similarly, if $\rho_0 = 1$, $\si_0 = \inf\{ t > 0: \rho_0 \bar{G}(t) < 1 \}$, which is unique and by translation one can obtain a new equation on $[\si_0, \infty)$ as follows:
$$
\ga_s = \rho_0 \bar{G}(s+ \si_0) + \int_{\si_0}^s \bar{F}(s-u) \la_u du,
$$
which also admits a unique solution until it hits $1$. Thus in both cases, the first interval $\tilde{L}_1$ is determined by $F$, $G$ and $\la$, and hence unique. The argument for the latter case can be modified and applied to derive a contradiction when $l > 1$. In this case $\si_{l-1}$ is given by \eqref{eq:si_k} with $J_{l-1} = \cup_{i=1}^{l-1} \tilde{L}_i$, and is hence unique. We therefore translate our equation to $[\si_{l-1}, \infty)$ to obtain like before:
$$
\ga_s = \rho_0 \bar{G}(s+ \si_{l-1}) + H(s) + \int_{\si_{l-1}}^s \bar{F}(s-u) \la_u du,
$$
where $H(s) = \int_{J_{l-1}} \bar{F}(s-u) \la_u du$. Again, this admits a unique solution until the first time it hits $1$, and hence $\tilde{L}_1$ is also unique. This provides our required contradiction.
\end{proof}

\subsection{Related processes}
\subsubsection{Integrated fraction of occupied servers}
Let $\tte^n$ denote the integrated fraction of occupied servers, that is, for $t > 0$:
$$
\tte_t^n = \int_0^t \rho_u^n du.
$$
Observe that $t - \tte^n_t = \int_0^t (1-\rho_u^n) du$ is the {cumulative idleness (in the sense that this counts the time instants when any server is idle)} since $\rho^n_u = 1$ only if all the servers are occupied. Indeed, the fluid limit in~\eqref{eq:tte} below provides a relation between the mean service times, arrival intensity and the integrated process. 

It is readily seen that $\tte^n$ has an integral representation with respect to the counting measure $p^R$:
\beq\label{eq:tte^n-decomp}
\tte_t^n = \dfrac{1}{n} \sum_{i=1}^{\infty} \mathbf{1}_{\{ \rho_{T_i}^n < 1 \}} \mathbf{1}_{\{T_i < t\}} S_i \wedge (t-T_i)
= \int_0^t \int_{\R} V_n(t,u,x) p^R(du, dx),
\eeq
where $V_n(t,u,x) = \frac{1}{n} \mathbf{1}_{\{\rho_{u-}^n < 1\}} \mathbf{1}_{\{u < t\}} (x \wedge (t-u))$. Similar to Theorem~\ref{thm:fluid-lim} we now have the following fluid limit for the integrated process $\Theta^n$.
\begin{theorem}\label{thm:int-proc}
Let the conditions in Assumption~\ref{assum:model-2} hold. Then for any $T>0$ we have:
$$
\lim_{n \to \infty} \sup_{t \in [0,T]} \lln \Theta_t^n - \Theta_t \rrn = 0, \quad \textnormal{almost surely},
$$
where $\tte$ for $t>0$ is given by
$$
\tte_t = \int_0^t \rho_u du,
$$
and where $\rho$ is given by \eqref{eq:rho-2}.
\end{theorem}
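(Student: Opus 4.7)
The plan is to bootstrap directly from the uniform convergence of $\rho^n$ to $\rho$ already established in Corollary~\ref{cor:fluid-lim-2}. Since $\Theta^n_t = \int_0^t \rho_u^n\,du$ and $\Theta_t = \int_0^t \rho_u\,du$ are just time-integrals of $\rho^n$ and $\rho$, uniform convergence of the integrands on $[0,T]$ will yield uniform convergence of the integrals by a one-line estimate. No additional tightness or limit-identification argument is needed.

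Concretely, for each $t \in [0,T]$,
\begin{equation*}
|\Theta^n_t - \Theta_t| = \left|\int_0^t (\rho^n_u - \rho_u)\,du\right| \le \int_0^t |\rho^n_u - \rho_u|\,du \le T\,\sup_{u \in [0,T]} |\rho^n_u - \rho_u|.
\end{equation*}
The right-hand side is independent of $t$, so taking the supremum on the left over $t \in [0,T]$ preserves this bound, giving
\begin{equation*}
\sup_{t \in [0,T]} |\Theta^n_t - \Theta_t| \le T\,\sup_{u \in [0,T]} |\rho^n_u - \rho_u|.
\end{equation*}
By Corollary~\ref{cor:fluid-lim-2} the right-hand side tends to zero almost surely, which is exactly the desired conclusion.

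For a more representation-based alternative, one could instead start from the stochastic-integral decomposition~\eqref{eq:tte^n-decomp} and split $\Theta^n$ into a $p_\ast^R$-integral and a compensator integral with integrand $V_n$. Since $|V_n(t,u,x)| \le T/n$, the compensated part has second moment of order $1/n$ and vanishes in the $ucp$ sense; the compensator part equals $\int_0^t \mathbf{1}_{\{\rho^n_{u-}<1\}}\bigl(\int_0^\infty (x \wedge (t-u))\,\nu(x)\,dx\bigr)\la_u\,du$, and one could then mollify the indicator and pass to the limit exactly as in Theorem~\ref{thm:fluid-lim}. However, this machinery is unnecessary for the stated conclusion, and I anticipate no genuine obstacle in this theorem: the only nontrivial step, namely the uniform convergence of $\rho^n$ itself, has already been carried out in Corollary~\ref{cor:fluid-lim-2}, and everything here reduces to the sup-integral estimate above.
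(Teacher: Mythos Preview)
Your proposal is correct and matches the paper's primary proof essentially line for line: the paper also invokes Corollary~\ref{cor:fluid-lim-2} and bounds $\lln \int_0^t \rho_u^n\,du - \int_0^t \rho_u\,du\rrn$ by $T\sup_{u\in[0,T]}|\rho_u^n-\rho_u|$ (phrased via an $\ep/T$ choice). The paper then gives, as a second ``alternate explicit proof,'' precisely the $p_\ast^R$/compensator decomposition you sketch in your final paragraph, so you have anticipated both arguments.
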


\begin{remark}
Observe that \eqref{eq:rho-2} implies that $\tte$ has the alternate more explicit expression:
\beq\label{eq:tte}
\tte_t = \rho_0 \int_0^t E[S^0 \wedge t] du + \int_0^t \mathbf{1}_{\{\rho_{u-}<1\}} E [S\wedge (t-u)]\la_u du,
\eeq
where $S^0$ is distributed as $G$, while $S$ is distributed as $F$.
This is readily obtained by integrating the right hand side of \eqref{eq:rho-2}.
\end{remark}

\begin{proof} By Corollary~\ref{cor:fluid-lim-2} we have that
$$
\lim_{n \to \infty} \sup_{t \in [0,T]} \lln \rho_t^n(\om) - \rho_t \rrn = 0,
$$
for all $\om \in \oom_1$ such that $\oom_1 \subset \oom$ and $P(\oom) = 1$. Consequently fix $\om \in \oom_1$ and $\ep > 0$ to obtain $N(\om)$ large enough such that
$$
\lln \rho_t^n(\om) - \rho_t \rrn < \dfrac{\ep}{T},
$$
for all $n > N(\om)$. Thus we have 
$$
\lln \int_0^t \rho_u^n du - \int_0^t \rho_u du \rrn \leq \int_0^t \lln \rho_u^n - \rho_u \rrn du \leq \ep,
$$
for all $n > N(\om)$. This completes the proof.

\noindent
\emph{Alternate explicit proof.}
From \eqref{eq:tte^n-decomp} we have the following decomposition 
\begin{align*}
\tte_t^n &= \int_0^t \int_{\R} V_n(t,u,x) p_{\ast}^R (du, dx) + \int_0^t \int_{\R} V_n(t,u,x) p_{c}^R (du, dx)\\
&= \int_0^t \int_{\R} V_n(t,u,x) p_{\ast}^R (du, dx) + \int_0^t \int_{\R} \mathbf{1}_{\{ \rho_{u-}^n < 1 \}} \mathbf{1}_{\{ u<t \}} (x \wedge (t-u)) \nu(x) \la_u dx du\\
&= \int_0^t \int_{\R} V_n(t,u,x) p_{\ast}^R (du, dx) + \int_0^t \mathbf{1}_{\{\rho_{u-}^n < 1\}} \be[X \wedge (t-u)] \la_u du\\
&= X_t^{1,n} + \ga_t^{1,n},
\end{align*}
where $X^{1,n}$ is a semimartingale bounded above by $\Theta^n$ (since $\ga^{1,n}$ is positive) and in turn by $T$ (since we are in a finite time horizon $T$ and $|\rho^n| \leq 1$).

Also, notice that $\Theta^n$ is a continuous function and since $\rho^n \leq 1$, its modulus of continuity satisfies:
$$
w_{\Theta^n}(\der) := \sup_{|s-t|<\der} |\int_s^t \rho_u^n du| \leq \der
$$
In addition, the modulus of continuity of $\ga^{1,n}$ satisfy:
$$
w_{\ga^{1,n}}(\der) \leq \sup_{|s-t| \leq \der} 2\int_s^t \E[X] \la_u du \leq 2 w_{\laa^{\ast}}(\der),
$$
where $\laa^{\ast}$ is given by
$$
\laa^{\ast} (t) = \int_0^t \E[X] \la_u du,
$$
which is uniformly continuous on $[0,T]$. As a consequence of all this we have:
$$
\lim_{\der \downarrow 0} \sup_n w_{X^{1,n}}(\der) = 0.
$$
Observe that the $L^2$-norm of the semimartingale $X^{1,n}$ satisfies:
\begin{align*}
\E {\lp X_t^{1,n} \rp}^2 &= \E \lc \int_0^t V_n^2(u,x) p_c^R(du,dx) \rc\\
&\leq \dfrac{1}{n^2} \int_0^t \int_{\R} (x \wedge (t-u))^2 p_c^R(du, dx)\\
%&= \dfrac{M_t^{2,n}}{n^2} + \dfrac{1}{n} \ga_t^{2,n},
&\leq \dfrac{1}{n} \E[X^2] \int_0^t \la_u du \leq \dfrac{1}{n} \E[X^2] \laa  {\longrightarrow} 0.
\end{align*}
Consequently for each $t \in [0,T]$, $X_t^{1,n} \stackrel{p}{\rightarrow} 0$. Thus the finite dimensional vectors $(X_{t_1}^{1,n}, \ldots, X_{t_n}^{1,n}) \stackrel{p}{\longrightarrow} (0, \ldots, 0)$. Recalling the tightness condition we have thus obtained that $X^{1,n}$ converges in distribution to the constant zero function and hence also in probability. Since the limiting function is continuous, the convergence is also in the uniform topology. Thus we have:
$$
X^{1,n} \stackrel{ucp}{\longrightarrow} 0.
$$
By our previous considerations since $\rho^n \stackrel{ucp}{\longrightarrow} \rho$ we have 
$$
\ga^{1,n} \stackrel{ucp}{\longrightarrow} \ga^1,
$$ where $\ga^1$ is given by  $\ga_t^{1} = \int_0^t \mathbf{1}_{\{\rho_{u-}<1\}} \be[X\wedge(t-u)] \la_u du$.
We have thus obtained 
$$
\Theta^{n} \stackrel{ucp}{\rightarrow} \ga^1.
$$
A similar trick as employed in the previous section guarantees almost sure convergence. This is because for any subsequence there is a further subsequence such that the convergence of $X^{1,n}$ and $\ga^{1,n}$ happen almost surely. In addition, there exists an $\oom_3 \subset \oom$ with $P(\oom_3) = 1$ such that for every $\om \in \oom_3$, the sequence $\{\Theta^{n}(\om)\}_{n \geq 1}$ is tight. Similar calculations as employed in the previous section now yield:
$$
\Theta^{n} {\longrightarrow} \ga^1~\textnormal{almost surely,}
$$
with uniform convergence over $[0,T]$.
\end{proof}

The following regarding the integrated process is an easy corollary of Theorem~\ref{thm:exi+uniq}.
\begin{corollary}
There exists an unique solution $\tte$ to \eqref{eq:tte}, where $\rho$ is given by \eqref{eq:rho-2}.
\end{corollary}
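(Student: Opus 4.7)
The plan is to deduce this corollary essentially for free from Theorem~\ref{thm:exi+uniq}. The crucial observation is that the right-hand side of \eqref{eq:tte} depends on $\Theta$ only through the function $\rho$ (via the indicator $\mathbf{1}_{\{\rho_{u-}<1\}}$); it is not a genuine fixed-point equation in $\Theta$, but rather an explicit formula for $\Theta$ once $\rho$ has been identified. Thus the entire content of the corollary reduces to the existence and uniqueness of $\rho$, which has already been supplied.

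First I would invoke Theorem~\ref{thm:exi+uniq} to extract the unique $\rho \in D[0,T]$ solving \eqref{eq:rho-2}, together with the bound $\rho_u \in [0,1]$ established in that proof. This guarantees that $\Theta_t := \int_0^t \rho_u \, du$ is well defined and finite for every $t \in [0,T]$.

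Next I would verify that this $\Theta$ satisfies \eqref{eq:tte}. Integrating \eqref{eq:rho-2} over $[0,t]$ and applying Fubini to swap the order of integration gives
\begin{align*}
\Theta_t \;=\; \int_0^t \rho_s \, ds
 &= \rho_0 \int_0^t \bar G(s)\,ds + \int_0^t \int_0^s \mathbf{1}_{\{\rho_{u-}<1\}} \bar F(s-u)\,\la_u \, du\,ds \\
 &= \rho_0\, \E[S^0 \wedge t] + \int_0^t \mathbf{1}_{\{\rho_{u-}<1\}}\, \E[S\wedge (t-u)]\, \la_u\, du,
\end{align*}
using the standard identity $\E[Z \wedge t] = \int_0^t \bar H(s)\,ds$ for a non-negative $Z \sim H$. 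This is precisely \eqref{eq:tte}, so existence is established.

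For uniqueness, suppose $\tilde\Theta$ is any other solution of \eqref{eq:tte}. Because the right-hand side of \eqref{eq:tte} is expressed solely in terms of the (unique) $\rho$ from Theorem~\ref{thm:exi+uniq} and in terms of the data $\rho_0, F, G, \la$, it is a single deterministic function of $t$, independent of $\tilde\Theta$. Hence $\tilde\Theta_t = \Theta_t$ for every $t \in [0,T]$. No genuine obstacle arises: the real work — showing that $\rho$ itself is uniquely determined — was done in Theorem~\ref{thm:exi+uniq} via the iterative identification of the hitting/exit times $\tau_k, \si_k$, and the corollary is just the routine downstream consequence.
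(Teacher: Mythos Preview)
Your proposal is correct and matches the paper's approach exactly: the paper does not supply a separate proof but simply records the statement as ``an easy corollary of Theorem~\ref{thm:exi+uniq},'' relying on the earlier remark that \eqref{eq:tte} is obtained by integrating the right-hand side of \eqref{eq:rho-2}. Your write-up just makes explicit the Fubini step and the observation that \eqref{eq:tte} is an explicit formula in the already-unique $\rho$ rather than a fixed-point equation in $\Theta$, which is precisely the intended argument.
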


\subsubsection{Blocked arrivals} Blocking probabilities and congestion measurement are the most important measures of performance in loss models. Computing these quantities in non-stationary models is rather hard to do requiring approximations~\cite{pender1,whitt2}. As a direct consequence of Theorem~\ref{thm:fluid-lim}, our next result establishes a fluid limit for the {fluid-scaled} cumulative number of blocked arrivals, $$b^n_t := \frac{1}{n} \sum_{i=1}^{\infty} \mathbf 1_{\{\rho^n_{T_i} = 1\}} \mathbf 1_{\{T_i \leq t\}}.$$ Note that $n b^n_t$ counts the number of arrivals by $t > 0$ that encountered a fully occupied system on arrival. %\hb{We emphasize that cumulative number of blocked arrivals in $n b^n_t$, and the fluid limit is for $b^n_t$.}
\begin{theorem}
	Let the conditions of Theorem~\ref{thm:fluid-lim}. Then for any $T > 0$ we have:
		$$
			\lim_{n\to\infty} \sup_{t \in [0,T} |b^n_t - b_t| = 0 ~\quad \textnormal{almost surely,}
		$$
		where $b$ for $t > 0$ is given by
		$$
			b_t = \int_0^t \mathbf 1_{\{\rho_u = 1\}} \lambda_u du
		$$
		and $\rho$ is given by~\eqref{eq:rho-2}.
\end{theorem}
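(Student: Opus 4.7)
The plan is to mirror the semimartingale decomposition used in the proof of Theorem~\ref{thm:fluid-lim} and then resolve the discontinuity of the indicator $\mathbf{1}_{\{\cdot=1\}}$ via a conservation identity, rather than by direct continuous mapping. Writing the blocked-arrival count as a stochastic integral against the marked counting measure,
\[
b^n_t = \int_0^t \int_{\R^+} \frac{1}{n}\mathbf{1}_{\{\rho^n_{u-}=1\}}\, p^R(du,dx),
\]
and adding and subtracting the intensity kernel \eqref{eq:compensator}, I would decompose $b^n_t = Y^n_t + \eta^n_t$, where
\[
Y^n_t = \int_0^t\!\int_{\R^+} \frac{1}{n}\mathbf{1}_{\{\rho^n_{u-}=1\}}\, p^R_\ast(du,dx),\qquad \eta^n_t = \int_0^t \mathbf{1}_{\{\rho^n_{u-}=1\}}\,\la_u\, du.
\]

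First I would treat the martingale piece exactly as $X^n$ was treated in the proof of Theorem~\ref{thm:fluid-lim}: the integrand is bounded by $1/n$, so the isometry yields $\E[(Y^n_t)^2] \leq n^{-1}\Lambda(T) \to 0$, which combined with the Cram\'er--Wold device and the tightness furnished by the uniform bound $|Y^n_t| \leq 1 + \Lambda(T)$ and by the same modulus-of-continuity estimate (the jumps of $b^n$ are those of $A^n$, which are finite on $[0,T]$ almost surely) gives $Y^n \stackrel{ucp}{\longrightarrow} 0$.

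The main obstacle is identifying the limit of $\eta^n$, because $\mathbf{1}_{\{\cdot=1\}}$ is neither upper nor lower semicontinuous at $1$ and the limit $\rho$ may spend positive Lebesgue time at the level $1$. To sidestep this, I would exploit the elementary identity
\[
\eta^n_t = \int_0^t \la_u\, du - \int_0^t \mathbf{1}_{\{\rho^n_{u-}<1\}}\,\la_u\, du,
\]
and then rerun the mollifier / weak-$*$ subsequence argument from the proof of Theorem~\ref{thm:fluid-lim} verbatim, with $h(u)=\la_u$ in place of $h(u)=\bar F(t-u)\la_u$ (which still lies in $L^1[0,t]$ by local integrability of $\la$). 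That argument, together with the uniqueness guaranteed by Theorem~\ref{thm:exi+uniq} and the usual subsubsequence upgrade, yields
\[
\int_0^t \mathbf{1}_{\{\rho^n_{u-}<1\}}\,\la_u\, du \;\longrightarrow\; \int_0^t \mathbf{1}_{\{\rho_{u-}<1\}}\,\la_u\, du \;=\; \int_0^t \mathbf{1}_{\{\rho_u<1\}}\,\la_u\, du \quad \text{a.s.,}
\]
where in the last equality we use that $\rho$ has at most countably many discontinuities, so $\rho_{u-}$ and $\rho_u$ agree Lebesgue-almost everywhere. Subtracting from $\Lambda(t)$ identifies the pointwise a.s.\ limit of $\eta^n_t$ as $b_t = \int_0^t \mathbf{1}_{\{\rho_u=1\}}\la_u\, du$.

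Finally, to upgrade pointwise to uniform convergence on $[0,T]$, I would invoke the standard fact that if a sequence of non-decreasing functions converges pointwise to a continuous non-decreasing limit on a compact interval, then the convergence is automatically uniform. Since $b^n$ is non-decreasing in $t$, $b$ is absolutely continuous (hence continuous) with density $\mathbf{1}_{\{\rho_u=1\}}\la_u$, and $Y^n \stackrel{ucp}{\to}0$ transfers the pointwise limit of $b^n = Y^n + \eta^n$ through to $b$ a.s., the conclusion \(\sup_{t\in[0,T]} |b^n_t - b_t| \to 0\) a.s.\ follows.
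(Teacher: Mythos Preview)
Your proposal is correct and matches the approach the paper indicates (the paper omits the proof, saying only that it ``closely follows that of Theorem~\ref{thm:fluid-lim}''); the semimartingale decomposition $b^n=Y^n+\eta^n$, the isometry bound for $Y^n$, and the mollifier/weak-$*$ identification of the compensator limit---with your complementary identity $\eta^n_t=\Lambda(t)-\int_0^t\mathbf{1}_{\{\rho^n_{u-}<1\}}\la_u\,du$ neatly reducing matters to the case already handled---are exactly that template. One small fix: the bound $|Y^n_t|\le 1+\Lambda(T)$ is not correct since $b^n_t$ need not be dominated by $1$; the right pathwise bound is $|Y^n_t|\le n^{-1}A^n_T+\Lambda(T)$, which is almost surely finite uniformly in $n$ by the functional strong law for the Poisson arrival process, and with this correction the tightness step goes through unchanged.
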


We omit the proof as it closely follows that of Theorem~\ref{thm:fluid-lim}.  Observe that the ratio
$$
\frac{b_t}{\Lambda_t} = \frac{\int_0^t \mathbf 1_{\{\rho_u = 1\}} \lambda_u du}{\int_0^t \lambda_u du}
$$
can be used as a measure of system congestion or an approximation of the likelihood of being blocked on arrival at the queue.

\section{Conclusions}
The results in this note complement extant results establishing many-server fluid limits, by considering the nonstationary, non-Markovian loss model setting, and by using a bespoke semimartingale representation of the fraction of occupied servers. The primary result shows that the fraction of  occupied servers converges to the unique solution of a Volterra integral equation. As a consequence of our main result, we also establish a fluid limit to the integrated number of occupied servers and the fraction of arriving jobs that are blocked on arrival. We anticipate our results, proofs of which are quite simple, should be broadly useful. In future work we anticipate extending the analysis to include functional central limit theorems. However, the analysis is much harder than the fluid limit results in this note and merit a separate paper.

\section{Acknowledgements}
PC gratefully acknowledges support from a Purdue Research Foundation dissertation fellowship. HH was partly supported by the National Science Foundation through grant CMMI/1636069. 
\section{References}
\bibliographystyle{elsarticle-num-names}
\bibliography{references}

\end{document}